\DeclarePairedDelimiter{\norm}{\lVert}{\rVert}
\newtheorem{thm}{Theorem}
\newtheorem{lmm}{Lemma}
\newtheorem{ass}{Assumption}
\newenvironment{smallarray}[1]
 {\null\,\vcenter\bgroup\scriptsize
 \arraycolsep=.13885em
 \hbox\bgroup$\array{@{}#1@{}}}
 {\endarray$\egroup\egroup\,\null}
\begin{document}
\title{Robustness and Convergence Analysis of First-Order Distributed Optimization Algorithms over Subspace Constraints}

\author{Dennis~J.~Marquis, 
  Dany~Abou~Jaoude, 
  Mazen~Farhood, 
  and~Craig~A.~Woolsey
\thanks{D. Marquis, M. Farhood, and C. Woolsey are with the Kevin T. Crofton Department of Aerospace and Ocean Engineering, Virginia Tech, Blacksburg, VA 24061, USA (e-mail: \{dennisjm, farhood, cwoolsey\}@vt.edu). D. Abou Jaoude is with the Department of Mechanical Engineering, American University of Beirut, Lebanon (e-mail: da107@aub.edu.lb).}\thanks{This work was supported by the Office of Naval Research (ONR) under Award No. N00014-18-1-2627 and the Army Research Office (ARO) under Contract No. W911NF-21-1-0250.}}

\markboth{Journal of \LaTeX\ Class Files,~Vol.~14, No.~8, August~2015}%
{Shell \MakeLowercase{\textit{et al.}}: Bare Demo of IEEEtran.cls for IEEE Journals}
%

\maketitle

\begin{abstract}
This paper extends algorithms that remove the fixed point bias of decentralized gradient descent to solve the more general problem of distributed optimization over subspace constraints. Leveraging the integral quadratic constraint framework, we analyze the performance of these generalized algorithms in terms of worst-case robustness and convergence rate. The utility of our framework is demonstrated by showing how one of the extended algorithms, originally designed for consensus, is now able to solve a multitask inference problem. 
\end{abstract}

%
\IEEEpeerreviewmaketitle

\section{Introduction}
\label{sec:intro}

There is considerable literature on iterative optimization algorithms that rely on local information exchange to solve the consensus problem. Decentralized gradient descent (DGD) has been shown to exhibit linear convergence to a fixed point when minimizing strongly convex objective functions with a fixed step-size $\mu$~\cite{Yuan2016}. Unlike its centralized counterpart, however, DGD converges to a $\textit{O}(\mu)$ neighborhood of the optimal solution. Diminishing step-size schemes allow DGD to converge without a bias, but the resulting convergence rate is sub-linear. Algorithms have been developed capable of achieving linear convergence rates for strongly convex objective functions without this fixed point bias, including EXTRA~\cite{Shi2015}, NIDs~\cite{Li2019}, Exact Diffusion~\cite{Yuan2019}, DIGing~\cite{Nedic2017}, unified methods~\cite{Jakovetic2019}, AugDGM~\cite{Xu2015}, and SVL~\cite{Sundararajan2020}. Some of these algorithms offer additional improvements, such as the ability to handle time-varying graphs or allowing uncoordinated step-sizes amongst agents. Techniques used to accelerate centralized algorithms have also been applied to the distributed setting. For example, Nesterov acceleration~\cite{Nesterov2018} has been used to improve distributed algorithm convergence rates when the objective function has a high condition ratio~\cite{Qu2020}. Recently, these bias-correction algorithms have started to be analyzed in the stochastic setting~\cite{Yuan2020}, specifically to determine under which scenarios they outperform traditional methods like DGD.

In addition to algorithms designed exclusively for the consensus problem, there are algorithms that solve a more general class of problems: distributed optimization over subspace constraints. In \cite{Nassif2020a}, it is shown that many common distributed optimization problems can be cast as distributed subspace constrained optimization problems, including consensus optimization, coupled optimization, optimization under affine constraints, and band-limited graph signal estimation. Two iterative algorithms that use local information exchange to solve this type of problems are DiSPO~\cite{Lorenzo2019} and the distributed adaptive strategy proposed in \cite{Nassif2020a,Nassif2020b}, which will be referred to as DAS in the sequel. These two algorithms have update equations that are analogous to DGD and a diffusion form~\cite{Sayed2014} of DGD, respectively. Consequently, DiSPO and DAS also exhibit a fixed point~bias.

The integral quadratic constraint (IQC) framework~\cite{Megretski1997SystemConstraints} provides an approach to analyze the stability of dynamical systems by modeling these systems as a feedback interconnection between a linear time-invariant (LTI) system $G$ and an uncertainty operator $\Delta$ that lies in a pre-specified set $\mathbf{\Delta}$, described by an IQC. The original analysis conditions in the IQC framework were in the frequency domain. More recent works that rely on dissipativity theory have proposed IQC analysis conditions derived in the time domain; see, e.g., \cite{Jaoude2019,Fry2021}, for relevant works in discrete time. 

In~\cite{Lessard2016}, the IQC framework is adapted to analyze the convergence rate of optimization algorithms. In this setup, optimization algorithms are interpreted as discrete time dynamical systems and the operator $\Delta$ represents the nonlinear gradient computations. Analysis results are formulated as semidefinite programs (SDPs), therefore this approach is constructive and does not rely on algorithm-specific expert knowledge to produce algorithm performance guarantees. 
The IQC framework has also been used in algorithm design~\cite{VonScoy2018,Cyrus2018}.

In this paper, we extend the consensus algorithms that remove the fixed point bias of DGD to solve the more general problem of distributed optimization over subspace constraints. By modifying the weights of local information exchange, these algorithms can now be applied to new problems, such as multitask inference. Additionally, we propose an IQC-based framework to analyze the performance of these generalized algorithms in terms of worst-case robustness and convergence rate. Our analysis framework is an extension of those in \cite{Sundararajan2020,Sundararajan2018}, which are used to analyze the convergence rate of first-order distributed consensus optimization algorithms, and the framework in~\cite{VanScoy2021}, which is used to analyze the robustness of first-order centralized optimization algorithms. In addition to proposing extended algorithms for handling subspace constraints, our contributions consist of (1) handling a more general class of distributed optimization problems and (2) extending the robustness analysis results to the distributed setting. As an illustrative case study, our framework is used to compare the extended version of AugDGM to DAS for solving a multitask inference problem.

This paper is organized as follows. In Section~\ref{sec:prelim}, the preliminaries are introduced. In Section~\ref{sec:general}, new algorithms are proposed for distributed optimization under arbitrary subspace constraints. Section~\ref{sec:analysis} presents the analysis results. Section~\ref{sec:bias} presents the case study. Section~\ref{sec:conclude} concludes~the~paper.
\section{Preliminaries}
\label{sec:prelim}

\subsection{Notation}
$\mathbb{R}^n$ denotes the space of real-valued vectors of dimension $n$. $\mathbb{N}$ corresponds to the set of natural numbers. $X \succ 0$ and $X \succeq 0$ indicate that a symmetric real matrix $X$ is positive definite and positive semidefinite, respectively. $\mathcal{R}(M)$, $\mathcal{N}(M)$, and $\mathrm{tr}(M)$ denote the range, nullspace, and trace of matrix $M$, respectively. $\mathbf{0}$ denotes a zero matrix of appropriate dimension. $I_i$ denotes the $i \times i$ identity matrix. $\mathbf{1}_N$ is an N-entry vector of ones. The symbol $\otimes$ denotes the Kronecker product. $\mathrm{col}\{v_k\}_{k=1}^N$ denotes the vertical concatenation of the vectors $v_1,\ldots,v_N$ and $\mathrm{diag}\{\lambda_k\}_{k=1}^N$ denotes  $N \times N$ matrix formed from the diagonal augmentation of the scalars $\lambda_1,\ldots,\lambda_N$. Given a projection matrix $\mathcal{P_U}$ and a network with gossip matrix $\mathcal{A}$, we define the spectral gap of matrix $\mathcal{A}$ as $\sigma \coloneqq \norm{\mathcal{A}-\mathcal{P_U}}$, where $0\leq \sigma < 1$, $\mathcal{P_U}$ is a projection matrix, and $\norm{.}$ denotes the spectral norm. For a given vector $v$, $\norm{v}$ denotes the $\ell^2$ norm of $v$.

\subsection{Problem Formulation}
\label{sec:problem}
We consider a network consisting of $N$ agents, connected over an undirected graph $\mathcal{G}$. The set of vertices of $\mathcal{G}$ is defined as $\mathcal{V}=\{1,\ldots,N\}$, where each vertex $k\in\mathcal{V}$  corresponds to an agent, referred to as ``agent $k$.''  
Ordered pair $(i,j)$ is in edge set $\mathcal{E}$ if and only if there is an edge between vertices $i\in \mathcal{V}$ and  $j \in \mathcal{V}$. The  optimization problem to be solved is\vspace{-2mm}
\begin{equation}\label{eq:Jglobal}
\mathrm{min}\;\sum_{k=1}^NJ_k(\omega_k) \quad \mathrm{subject\;to}\quad \omega \in \mathcal{R}(\mathcal{U}),
\vspace{-2mm}\end{equation}
where $\mathcal{U}$ is a matrix with full column rank, whose columns form a basis of the subspace constraining $\omega \coloneqq \mathrm{col}\{\omega_k\}_{k=1}^N$.
Each agent $k$, for $k\in\mathcal{V}$, has access to part of the objective function, i.e. a local objective function $J_k:\mathbb{R}^d\rightarrow\mathbb{R}$ that is convex and continuously differentiable. The subspace constraint is a coupling constraint, otherwise, minimization of (\ref{eq:Jglobal}) would simply require minimizing each objective function locally.

The local objective functions satisfy the assumptions below.
\begin{ass} The local objective function $J_k$ of each agent $k$ has an $L_k$-Lipschitz continuous gradient; i.e.,
\begin{equation*}
\norm{\nabla J_k(\omega_a)-\nabla J_k(\omega_b)} \leq L_k\norm{\omega_a-\omega_b} \quad \mbox{for all $\omega_a,\omega_b \in \mathbb{R}^d$}.
\end{equation*}\label{ass:Lip}\vspace{-4mm}
\end{ass}
\begin{ass} The local objective function $J_k$ of each agent $k$ is $m_k$-strongly convex; i.e., for all $\omega_a,\omega_b \in \mathbb{R}^d$,
\begin{equation*}
J_k(\omega_b)\geq J_k(\omega_a) + \nabla J_k(\omega_a)^T(\omega_b-\omega_a)+\frac{m_k}{2}\norm{\omega_b-\omega_a}^2.
\end{equation*}\label{ass:Strong}\vspace{-4mm}
\end{ass}

Strong convexity of the local objective functions ensures that the global objective function is also strongly convex, so that the minimizer $\omega^{\mathrm{opt}}$ of (\ref{eq:Jglobal}) is unique. The condition ratio of the global objective function is defined as $\kappa\coloneqq\frac{L}{m}$, where $L\coloneqq\mathrm{max}\{L_k\}_{k=1}^N$ and $m\coloneqq\mathrm{min}\{m_k\}_{k=1}^N$. Define $\mathcal{S}(m_k,L_k)$ as the set of local objective functions that satisfy Assumption~\ref{ass:Lip} and Assumption~\ref{ass:Strong} for some $L_k$ and some $m_k$, respectively.
\subsection{Algorithm Form}
\label{sec:algform}
The first-order distributed algorithms to be analyzed can be modeled using the following state space representation:
\begin{subequations}\label{eq:algmodel}
\begin{gather}
\label{eq:algmodel_lin1}\xi^{t+1}=A\xi^t+B(u^t+w^t),\\
 \label{eq:algmodel_lin2}y^t=C_y\xi^t,\quad \omega^t=C_\omega\xi^t,\\
 \label{eq:algmodel_u} u^t=\Delta(y^t)=\mathrm{col}\{\nabla J_k(y_k^t)\}_{k=1}^N.
\end{gather}
\end{subequations}
The vector $\xi^t \in \mathbb{R}^{nn_{\mathrm{alg}}}$ is the state value at iteration $t$, where $n \coloneqq Nd$ and $n_{\mathrm{alg}}$ is specific to each algorithm. The input vector $u^t$ is a stack of the local gradients evaluated at the respective $y^t_k$ of each agent, $y^t\coloneqq \mathrm{col}\{y_k^t\}_{k=1}^N$. For each state vector $\xi^t$, the algorithm iterate $\omega^t$ can be measured using $C_\omega$. In the absence of gradient noise, $w^t \equiv 0$, each algorithm is assumed to have a fixed point $(\xi^*,y^*,u^*,\omega^*)$ that satisfies~(\ref{eq:algmodel}).
 
The dimension $d$ is the same for all agents and the algorithms to be analyzed will have state space matrices that exhibit a special structure: $A=\bar{A}\otimes I_d$ for some matrix $\bar{A}$, $B=\bar{B}\otimes I_d$ for some matrix $\bar{B}$, etc. This observation supports the following assumption, without loss of generality.
\begin{ass} 
The dimension $d=1$.
\end{ass}

The state update is corrupted by zero-mean additive noise $w^t$. Additive noise can be used to model gradient noise due to numerical errors or approximations. In learning applications, where the true objective function is unknown but approximated through sample collection, gradient noise models the error between the true and estimated objectives. The noise satisfies the following assumption.
\begin{ass} The noise $w^t$ is zero-mean (i.e. unbiased), with covariance $\mathbb{E}w^t(w^t)^T \preceq R$ for some $R\succ 0$, for all $t \in \mathbb{N}$. The noise sequence has joint distribution $\mathbb{P}$, which is independent across iterations. If $w \sim \mathbb{P}$, then $w^t$ and $w^\tau$ are independent for all $t \neq \tau$. 
\label{ass:noise}\end{ass}

Each algorithm satisfies a respective invariant:\vspace{-2mm}
\begin{equation}\label{eq:alginvariant}
F_\xi\xi^t+F_uu^t=0 \quad \mathrm{for\;all\;} t \in \mathbb{N}.
\vspace{-2mm}\end{equation}
An invariant follows from the initialization constraint of a given algorithm and is often necessary to produce a feasible solution using the upcoming analysis methods.

\subsection{Algorithm Performance Metrics}
The below performance metrics are adopted from \cite{VanScoy2021}.
\subsubsection{Rate of Convergence}
The performance metric $\rho$ is an upper bound on the worst-case linear convergence rate of an algorithm across all objective functions $f$ in a constraint set $\mathcal{S}$ and all initial conditions. This rate describes the transient phase of algorithm iterates, where the noise input $w^t$ is negligible compared to the gradient input $u^t$. Thus, for this performance metric, it is assumed there is no noise and a $\rho$ is computed such that $\norm{\xi^t-\xi^*}\leq c\rho^t\norm{\xi^0-\xi^*}$  for some constant $c>0$, for all $t\in \mathbb{N}$. Formally, the rate of convergence is defined as 
\begin{equation}\label{eq:rho}
\rho \coloneqq \mathrm{inf}\left\{\rho>0\biggm| \mathrm{sup} \; \frac{\norm{\xi^t-\xi^*}}{\rho^t\norm{\xi^0-\xi^*}}<\infty\right\}.
\end{equation}

\subsubsection{Sensitivity}
The sensitivity $\gamma$ is a measure of robustness to additive noise. It bounds the standard deviation of iterates produced by the algorithm during the steady state phase. The quantity $\gamma^2$ can be interpreted as a bound on the generalized $H_2$ norm of the system. Formally, the sensitivity is defined as
\begin{equation}\label{eq:gamma}
\gamma \coloneqq \underset{T \rightarrow \infty}{\mathrm{lim \; sup}} \sqrt{\mathbb{E}_{w\sim \mathbb{P}}\frac{1}{T}\sum_{t=0}^{T-1}\norm{\omega^t-\omega^*}^2}.
\end{equation}
\subsection{IQCs Describing Gradients of Convex Functions}
For first-order algorithm analysis, the nominal system $G$ corresponds to linear dynamics (\ref{eq:algmodel_lin1})-(\ref{eq:algmodel_lin2}) and uncertainty operator $\Delta$ corresponds to the local gradient computations performed in (\ref{eq:algmodel_u}). The IQC characterizing $\Delta$ can be defined by symmetric matrix $M$ and $z$, the output of operator $\Psi$, driven by $u$ and $y$. Specifically, $\Psi$ is defined as
\begin{subequations}\label{eq:psimodel}
\begin{gather}
\psi^{t+1}=A_\Psi\psi^t+B^y_\Psi y^t+B^u_\Psi u^t, \quad \psi^0=\psi^*, \\
\label{eq:psimodelz} z^t=C_\Psi\psi^t+D^y_\Psi y^t+ D^u_\Psi u^t,
\end{gather}
\end{subequations}
with a fixed point defined by $(\psi^*,y^*,u^*,z^*)$.

Suppose $u=\Delta(y)$, $z=\Psi(y,u)$, and $z^*=\Psi(y^*,u^*)$. The operator $\Delta$ is said to satisfy the (a) Pointwise, (b) $\rho$-Hard, (c) Hard, (d) Soft IQC defined by $(\Psi,M)$ if the respective inequality holds for all $y \in \mathbb{R}^n$ and $T\in \mathbb{N}$:
\begin{subequations}\label{eq:IQCtypes}
\begin{gather}
(z^t-z^*)^TM(z^t-z^*)\geq 0 \quad \mathrm{for\;all\;} t\in \mathbb{N},\\
\textstyle\sum_{t=0}^T\rho^{-2t}(z^t-z^*)^TM(z^t-z^*)\geq 0,\\
\textstyle\sum_{t=0}^T(z^t-z^*)^TM(z^t-z^*)\geq 0,\\
\label{eq:IQCtypes_soft}\textstyle\sum_{t=0}^\infty(z^t-z^*)^TM(z^t-z^*)\geq 0,
\end{gather}
\end{subequations}
where the summation in (\ref{eq:IQCtypes_soft}) is finite.

These four types of IQCs, introduced in~\cite{Lessard2016}, form nested sets, i.e., $\{\mathrm{pointwise\;IQCs}\} \subseteq \{\rho\mathrm{-hard\;IQCs},\;\rho<1\}\subseteq \{\mathrm{hard\;IQCs}\}\subseteq\{\mathrm{soft\;IQCs}\}$. 

The next IQCs characterize  objective functions of interest, where $\bar{m}\coloneqq \mathrm{diag}\{m_k\}_{k=1}^N$ and $ \bar{L}\coloneqq \mathrm{diag}\{L_k\}_{k=1}^N$.
\vskip 1mm
\begin{lmm}[Distributed Sector IQC]\label{lem:sector}
Given~$J_k{\in} \mathcal{S}(\!m_k,\!L_k)$ for all $k \ {\in} \ \mathcal{V}$, if $u=\Delta(y)$, then $\Delta$ satisfies the pointwise IQC defined by $\Psi=\left[\begin{smallmatrix} \bar{L} && -I_N \\ - \bar{m} && I_N\end{smallmatrix}\right]$ and $M=\left[\begin{smallmatrix} \mathbf{0} && I_N \\ I_N && \mathbf{0}\end{smallmatrix}\right]$.
\end{lmm}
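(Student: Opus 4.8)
The plan is to exploit that the multiplier $\Psi$ here is static (memoryless), so that the pointwise IQC collapses to a single algebraic inequality at each iteration which, moreover, decouples across the $N$ agents. First I would evaluate $z^t-z^*$: since $\Psi$ applies the constant gain $\left[\begin{smallmatrix}\bar{L}&-I_N\\-\bar{m}&I_N\end{smallmatrix}\right]$ to $\mathrm{col}\{y,u\}$, and likewise $z^*=\Psi(y^*,u^*)$, linearity gives $z^t-z^*=\mathrm{col}\{\bar{L}(y^t-y^*)-(u^t-u^*),\ -\bar{m}(y^t-y^*)+(u^t-u^*)\}$. Substituting this and the block anti-diagonal $M$ into the quadratic form, and using $u^t-u^*=\mathrm{col}\{\nabla J_k(y_k^t)-\nabla J_k(y_k^*)\}_{k=1}^N$ (each entry a scalar, since $d=1$), the left-hand side of the pointwise IQC becomes $2\sum_{k=1}^N(L_k\,\delta y_k-\delta u_k)(\delta u_k-m_k\,\delta y_k)$, where $\delta y_k\coloneqq y_k^t-y_k^*$ and $\delta u_k\coloneqq\nabla J_k(y_k^t)-\nabla J_k(y_k^*)$.

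It then suffices to show that each summand is nonnegative, i.e., writing $h\coloneqq a-b$ and $g_k\coloneqq\nabla J_k(a)-\nabla J_k(b)$ for arbitrary scalars $a,b$, that $(L_k h-g_k)(g_k-m_k h)\ge 0$ whenever $J_k\in\mathcal{S}(m_k,L_k)$. This is the single-function sector bound underlying the IQC of \cite{Lessard2016}; I would establish it by introducing $\phi_k(\omega)\coloneqq J_k(\omega)-\tfrac{m_k}{2}\omega^2$, which by Assumptions~\ref{ass:Lip} and~\ref{ass:Strong} is convex with $(L_k-m_k)$-Lipschitz gradient, invoking the co-coercivity inequality $(\nabla\phi_k(a)-\nabla\phi_k(b))\,h\ge\tfrac{1}{L_k-m_k}(\nabla\phi_k(a)-\nabla\phi_k(b))^2$, and then back-substituting $\nabla\phi_k(a)-\nabla\phi_k(b)=g_k-m_k h$ and clearing the denominator. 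The degenerate case $m_k=L_k$ is handled directly: there $g_k=m_k h$, so both factors vanish and the summand is zero.

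Summing the $N$ per-agent inequalities yields $(z^t-z^*)^TM(z^t-z^*)\ge 0$ for every $t\in\mathbb{N}$, and since this holds for arbitrary $y\in\mathbb{R}^n$ with no summation over $t$ required, it is exactly the pointwise IQC of (\ref{eq:IQCtypes}); by the nesting of the IQC classes recalled above, the $\rho$-hard, hard, and soft versions follow at once. The only substantive step is the per-agent sector inequality of the second paragraph; everything else is bookkeeping with the block/Kronecker structure, and the one subtlety there is ensuring the co-coercivity argument degrades gracefully as $m_k\to L_k$.
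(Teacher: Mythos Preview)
Your proposal is correct and is precisely the argument the paper defers to by citing~\cite{Lessard2016}: the paper's own proof is nothing more than that citation together with the remark that the result is ``adapted for problems in the form~(\ref{eq:Jglobal}),'' and your write-up supplies exactly that adaptation---decoupling the block-diagonal quadratic form into $N$ per-agent sector conditions and invoking the standard co-coercivity argument for each $J_k\in\mathcal{S}(m_k,L_k)$. There is no difference in approach to discuss.
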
\vskip 1mm
\begin{proof} See~\cite{Lessard2016}, adapted for problems in the form (\ref{eq:Jglobal}). \end{proof}
\vskip 1mm
\begin{lmm}[Distributed Weighted Off-By-One IQC]\label{lem:weighted}
Given $J_k\in \mathcal{S}(m_k,L_k)$ for all $k \in \mathcal{V}$, if $u=\Delta(y)$, then for any ($\bar{\rho},\rho$) where $0\leq \bar{\rho}\leq\rho\leq 1$, $\Delta$ satisfies the $\rho$-hard IQC defined by $\Psi=\left[\begin{smallarray}{c|cc}\rule{0mm}{3mm}\mathbf{0} & - \bar{L} & I_N \\ \hline \rule{0mm}{3mm}\bar{\rho}^2I_N &  \bar{L} & -I_N \\ \mathbf{0} & - \bar{m} & I_N\end{smallarray}\right]$ and  $M=\left[\begin{smallmatrix}\mathbf{0} && I_N \\ I_N && \mathbf{0}\end{smallmatrix}\right]$.
\end{lmm}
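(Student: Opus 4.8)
The plan is to obtain this $\rho$-hard IQC as a convex combination of two IQCs that are already available: the classical weighted off-by-one IQC, which is the special case $\bar{\rho}=\rho$, and the pointwise sector IQC of Lemma~\ref{lem:sector}. Since $d=1$, the matrices $\bar{L}$, $\bar{m}$, and $M$ act blockwise over the agents. First I would unwind the dynamics of $\Psi$: because $A_\Psi=\mathbf{0}$, its state obeys $\psi^t-\psi^*=-\big(\bar{L}(y^{t-1}-y^*)-(u^{t-1}-u^*)\big)$ for $t\geq1$, while the initialization $\psi^0=\psi^*$ gives $\psi^0-\psi^*=0$. Writing $a^t\coloneqq\bar{L}(y^t-y^*)-(u^t-u^*)$ and $b^t\coloneqq(u^t-u^*)-\bar{m}(y^t-y^*)$ and adopting the convention $a^{-1}\coloneqq0$, this yields $z_1^t-z_1^*=a^t-\bar{\rho}^2a^{t-1}$ and $z_2^t-z_2^*=b^t$, hence $(z^t-z^*)^TM(z^t-z^*)=2\big(a^t-\bar{\rho}^2a^{t-1}\big)^Tb^t$ for every $t\in\mathbb{N}$.

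Next I would set $\lambda\coloneqq\bar{\rho}^2/\rho^2$, which lies in $[0,1]$ precisely because $0\leq\bar{\rho}\leq\rho$, and use the identity $a^t-\bar{\rho}^2a^{t-1}=\lambda(a^t-\rho^2a^{t-1})+(1-\lambda)a^t$. Summing against the weights $\rho^{-2t}$ then splits the quantity in the $\rho$-hard IQC as $\lambda\sum_{t=0}^{T}\rho^{-2t}\,2(a^t-\rho^2a^{t-1})^Tb^t+(1-\lambda)\sum_{t=0}^{T}\rho^{-2t}\,2(a^t)^Tb^t$. The first sum is nonnegative because, with $\bar{\rho}=\rho$, it is exactly the quantity constrained by the classical weighted off-by-one IQC for the gradient of a function in $\mathcal{S}(m_k,L_k)$, applied agent by agent; I would invoke~\cite{Lessard2016} here, adapted for problems of the form~(\ref{eq:Jglobal}) just as in Lemma~\ref{lem:sector}. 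The second sum is a sum with positive weights $\rho^{-2t}$ of the pointwise sector inequalities $2(a^t)^Tb^t\geq0$ furnished by Lemma~\ref{lem:sector}, hence also nonnegative. A convex combination of nonnegative quantities is nonnegative, giving the asserted $\rho$-hard IQC for all $T\in\mathbb{N}$.

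The main obstacle is, as usual, the $\bar{\rho}=\rho$ case: this is a Zames--Falb type inequality whose validity rests on the monotonicity (co-coercivity) of each $\nabla J_k$ and cannot be certified by a quadratic storage function alone, which is why I would cite it rather than reprove it. Within the present argument the only delicate step is the bookkeeping that identifies $z_1^t-z_1^*=a^t-\bar{\rho}^2a^{t-1}$, and in particular getting the $t=0$ boundary term right from $\psi^0=\psi^*$ — this is what forces $a^{-1}=0$, so that the $t=0$ contribution reduces to the sector term $2(a^0)^Tb^0$; the rest is the convex-combination observation. If one prefers to stay in the stacked form rather than reduce to a single agent, the identical computation applies, since the distributed weighted off-by-one IQC with $\bar{\rho}=\rho$ is simply the single-function result of~\cite{Lessard2016} assembled over the network.
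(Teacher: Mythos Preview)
Your argument is correct. The paper's own ``proof'' is a single sentence: it cites \cite{Lessard2016} and remarks that the result there is to be applied componentwise over the $N$ agents; no further argument is given. In particular, the weighted off-by-one IQC in \cite{Lessard2016} is already stated and proved for the full range $0\leq\bar{\rho}\leq\rho$, so the paper simply invokes that result wholesale.

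Your route is therefore more explicit than the paper's, not different in spirit. You unwind $\Psi$ to the closed form $(z^t-z^*)^TM(z^t-z^*)=2(a^t-\bar{\rho}^2a^{t-1})^Tb^t$ and then exhibit the one-parameter family in $\bar{\rho}$ as the convex combination, with weight $\lambda=\bar{\rho}^2/\rho^2\in[0,1]$, of the $\bar{\rho}=\rho$ weighted off-by-one IQC and the pointwise sector IQC of Lemma~\ref{lem:sector}. This is a clean structural observation: it shows transparently that $\bar{\rho}=0$ degenerates to the sector IQC while $\bar{\rho}=\rho$ is the sharpest dynamic multiplier in the family, and it isolates exactly the piece (the $\bar{\rho}=\rho$ Zames--Falb inequality) that genuinely requires co-coercivity of $\nabla J_k$. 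The only caveat is that you still cite \cite{Lessard2016} for that base case, so in the end both approaches rest on the same external lemma; your decomposition just makes the dependence on $\bar{\rho}$ visible rather than buried in the citation. One minor point: your formula for $\lambda$ tacitly assumes $\rho>0$, which is harmless since the $\rho$-hard IQC is only meaningful in that regime.
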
\vskip 1mm
\begin{proof} See~\cite{Lessard2016}, adapted for problems in the form (\ref{eq:Jglobal}). \end{proof}

\section{Generalization of First-Order Consensus Algorithms to Arbitrary Subspace Constraints}\label{sec:general}
The subspace defined by $\mathcal{U}$ has a corresponding symmetric projection matrix $\mathcal{P_U} \coloneqq \mathcal{U}(\mathcal{U}^T\mathcal{U})^{-1}\mathcal{U}^T$. The gradient projection method~\cite{Bertsekas1999} can solve (\ref{eq:Jglobal}) iteratively:
\begin{equation}\label{eq:gradientprojection}
\omega^{t+1}=\mathcal{P_U}\left(\omega^t-\mu \;\mathrm{col}\{\nabla J_k(\omega_k^t)\}_{k=1}^N\right).
\vspace{-2mm}\end{equation}
However, this is a centralized method due to its reliance on a centralized projection operation. To be implementable in a distributed setting, the projection operation must be replaced by a diffusion/mixing step using some symmetric gossip matrix $\mathcal{A}$. Matrix $\mathcal{A}$ must satisfy the sparsity pattern of the network (i.e., $\mathcal{A}_{ij}=0$ if $\{i,j\}\notin \mathcal{E}$) and the  convergence condition
\begin{equation}\label{eq:Acondition}
\underset{t\rightarrow \infty}{\mathrm{lim}}\mathcal{A}^t=\mathcal{P_U}.
\end{equation}
\begin{lmm}[\cite{DILorenzo2020}]\label{lem:Aiff}
Condition (\ref{eq:Acondition}) holds if and only if
\begin{equation}
\label{eq:Aiff}
\mathcal{A}\mathcal{P_U}=\mathcal{P_U}, \quad \mathcal{P_U}\mathcal{A}=\mathcal{P_U}, \quad \norm{\mathcal{A}-\mathcal{P_U}}<1.
\end{equation}
\end{lmm}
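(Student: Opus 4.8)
The plan is to reduce both directions to a single algebraic splitting: setting $E \coloneqq \mathcal{A} - \mathcal{P_U}$, I claim $\mathcal{A}^t = \mathcal{P_U} + E^t$ for every $t \geq 1$, provided $\mathcal{P_U}$ and $E$ annihilate one another on both sides, i.e. $\mathcal{P_U}E = E\mathcal{P_U} = \mathbf{0}$. Since $\mathcal{P_U}$ is an orthogonal projection ($\mathcal{P_U}^2 = \mathcal{P_U} = \mathcal{P_U}^T$) and $\mathcal{A}$ is symmetric, $E$ is symmetric, so $\norm{E}$ equals the largest modulus of an eigenvalue of $E$; I will invoke this only at the very end. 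The two ``absorbing'' identities $\mathcal{A}\mathcal{P_U} = \mathcal{P_U}$ and $\mathcal{P_U}\mathcal{A} = \mathcal{P_U}$ are exactly what make $\mathcal{P_U}E = E\mathcal{P_U} = \mathbf{0}$ hold, because $\mathcal{P_U}E = \mathcal{P_U}\mathcal{A} - \mathcal{P_U}^2$ and $E\mathcal{P_U} = \mathcal{A}\mathcal{P_U} - \mathcal{P_U}^2$.

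For the ``if'' direction I would assume (\ref{eq:Aiff}), derive $\mathcal{P_U}E = E\mathcal{P_U} = \mathbf{0}$ as above, then expand $(\mathcal{P_U}+E)^t$: all cross terms vanish, giving $\mathcal{A}^t = \mathcal{P_U} + E^t$ by a one-line induction on $t$. Submultiplicativity of the spectral norm then yields $\norm{\mathcal{A}^t - \mathcal{P_U}} = \norm{E^t} \leq \norm{E}^t \to 0$ since $\norm{E} < 1$, which is precisely (\ref{eq:Acondition}).

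For the ``only if'' direction I would assume (\ref{eq:Acondition}) and pass to the limit in $\mathcal{A}^{t+1} = \mathcal{A}\,\mathcal{A}^t = \mathcal{A}^t\,\mathcal{A}$, using continuity of matrix multiplication, to obtain $\mathcal{A}\mathcal{P_U} = \mathcal{P_U}\mathcal{A} = \mathcal{P_U}$, the first two conditions in (\ref{eq:Aiff}). These again give $\mathcal{P_U}E = E\mathcal{P_U} = \mathbf{0}$, hence $\mathcal{A}^t = \mathcal{P_U} + E^t$, so $E^t = \mathcal{A}^t - \mathcal{P_U} \to \mathbf{0}$. Finally, if $E$ had an eigenvalue $\lambda$ with $|\lambda| \geq 1$ and unit eigenvector $v$, then $\norm{E^t v} = |\lambda|^t \geq 1$ for all $t$, contradicting $E^t \to \mathbf{0}$; thus every eigenvalue of the symmetric matrix $E$ has modulus strictly less than $1$, i.e. $\norm{\mathcal{A}-\mathcal{P_U}} < 1$.

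The one step to handle with care is this last implication: $E^t \to \mathbf{0}$ only controls the spectral radius of $E$ in general, and it is the symmetry of $\mathcal{A}$ (hence of $E = \mathcal{A}-\mathcal{P_U}$) that upgrades this to the spectral-norm bound $\norm{\mathcal{A}-\mathcal{P_U}} < 1$ asserted in (\ref{eq:Aiff}). Everything else is the routine bookkeeping behind the decomposition $\mathcal{A}^t = \mathcal{P_U} + E^t$, which I expect to state cleanly rather than belabor.
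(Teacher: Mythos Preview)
Your argument is correct in both directions, and your care about the final step is well placed: without the standing symmetry assumption on $\mathcal{A}$ (and hence on $E=\mathcal{A}-\mathcal{P_U}$), the implication $E^t\to\mathbf{0}\Rightarrow\norm{E}<1$ would fail, since in general only the spectral radius is controlled. The paper, however, does not supply its own proof of this lemma; it simply cites the result from \cite{DILorenzo2020}, so there is no in-paper argument to compare your approach against.
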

\vskip 2mm

The consensus algorithms address the problem of finding a common $\bar{\omega} \in \mathbb{R}^d$ amongst agents that minimizes $\sum_{k=1}^NJ_k(\bar{\omega})$. This problem can be equivalently expressed as
\vspace{-2mm}\begin{equation}\label{eq:consensus}
\mathrm{min}\;\sum_{k=1}^NJ_k(\omega_k)\; \mathrm{subject\;to}\; \omega_j=\omega_k \mathrm{\;for\;all\;}j,k \in \mathcal{V},
\end{equation} 
which is a special case of (\ref{eq:Jglobal}) with $\mathcal{U}=\mathbf{1}_N \otimes I_d$.

Distributed consensus algorithms utilize a doubly stochastic gossip matrix $W$ that satisfies the sparsity pattern of the network. These algorithms' fixed points must satisfy the conditions $\omega^*=W\omega^*$ and $\omega^* \in \mathcal{R}(\mathbf{1}_N)$, which are equivalent to the consensus condition $\mathcal{N}(I-W)=\mathcal{R}(\mathbf{1}_N)$. Enforcing double stochasticity is simply an application of Lemma \ref{lem:Aiff}, where $\mathcal{P_U}=\frac{1}{N}\mathbf{1}_N\mathbf{1}_N^T$. The consensus condition is a special case of the more general condition $\mathcal{N}(I-\mathcal{A})=\mathcal{R}(\mathcal{U})$, which implies that $\omega^*=\mathcal{A}\omega^*$ and the fixed point $\omega^*$ of the algorithm satisfies the subspace constraint $\omega \in \mathcal{R}(\mathcal{U})$. 

The resulting conclusion from the above discussion is that distributed first-order algorithms designed to accelerate DGD or remove its fixed point bias can be generalized to a larger class of problems by careful adjustment of the gossiping scheme, i.e., by replacing  $W$ with an $\mathcal{A}$ satisfying Lemma~\ref{lem:Aiff}. Table~\ref{table:algs} lists the generalized versions of select algorithms, presented using the notation  in Section~\ref{sec:algform}. For a given network topology, the choice of $\mathcal{U}$ impacts the performance of these algorithms, which can be shown by applying the upcoming analysis results.

\begin{table*}[]
\centering
\caption{DiSPO, DAS, and First-Order Distributed Optimization Algorithms Generalized to Arbitrary Subspace Constraints}
\resizebox{2\columnwidth}{!}{
\begin{threeparttable}
\begin{tabular}{c|c|c}
\textbf{Algorithm} & DiSPO\tnote{1}\quad\cite{Lorenzo2019} & DAS\tnote{1}\quad\cite{Nassif2020a,Nassif2020b}\\ 
$\left[\begin{array}{c|c}A & B \\ \hline C_y & C_\omega \\ \hline F_\xi & F_u\end{array}\right]$ 
& $\left[\begin{array}{c|c}\mathcal{A} & -\mu I_N \\ \hline I_N & I_N \\ \hline \mathbf{0} & \mathbf{0}\end{array}\right]$ 
& $\left[\begin{array}{c|c}\mathcal{A} & -\mu \mathcal{A} \\ \hline I_N & I_N \\ \hline \mathbf{0} & \mathbf{0}\end{array}\right]$ 
\\ \hline

\rule{0mm}{3mm}EXTRA\tnote{2}\quad\cite{Shi2015} & NIDS\tnote{2,3}\quad\cite{Li2019} & ED\tnote{2}\quad\cite{Yuan2019,Yuan2019a} \\
$\left[\begin{array}{c|c}\begin{matrix}I_N+\mathcal{A} & -\widetilde{\mathcal{A}} & \mu I_N \\ I_N & \mathbf{0} & \mathbf{0} \\ \mathbf{0} & \mathbf{0} & \mathbf{0}\end{matrix} & \begin{matrix}-\mu I_N \\ \mathbf{0} \\ I_N \end{matrix} \\ \hline \begin{matrix}I_N & \mathbf{0} & \mathbf{0}\end{matrix} & \begin{matrix}I_N & \mathbf{0} & \mathbf{0}\end{matrix} \\ \hline \rule{0mm}{3mm}\begin{matrix}\mathcal{U}^T & -\mathcal{U}^T & \mu \mathcal{U}^T\end{matrix} & \mathbf{0}\end{array}\right]$
& $\left[\begin{array}{c|c}\begin{matrix}I_N+\mathcal{A} & -\widetilde{\mathcal{A}} & \mu \widetilde{\mathcal{A}} \\ I_N & \mathbf{0} & \mathbf{0} \\ \mathbf{0} & \mathbf{0} & \mathbf{0}\end{matrix} & \begin{matrix}-\mu \widetilde{\mathcal{A}} \\ \mathbf{0} \\ I_N \end{matrix} \\ \hline \begin{matrix}I_N & \mathbf{0} & \mathbf{0}\end{matrix} & \begin{matrix}I_N & \mathbf{0} & \mathbf{0}\end{matrix} \\ \hline \rule{0mm}{3mm}\begin{matrix}\mathcal{U}^T & -\mathcal{U}^T & \mu \mathcal{U}^T\end{matrix} & \mathbf{0}\end{array}\right]$ 
& $\left[\begin{array}{c|c}\begin{matrix}2\widetilde{\mathcal{A}} & -\widetilde{\mathcal{A}} \\ I_N & \mathbf{0}\end{matrix} & \begin{matrix}-\mu \widetilde{\mathcal{A}} \\ -\mu I_N\end{matrix} \\ \hline \begin{matrix}I_N & \mathbf{0} \end{matrix} & \begin{matrix}I_N & \mathbf{0} \end{matrix} \\ \hline \rule{0mm}{3mm}\begin{matrix}\mathcal{U}^T & -\mathcal{U}^T\end{matrix} & \mathbf{0}\end{array}\right]$ 
\\ \hline

\rule{0mm}{3mm}DIGing\tnote{4}\quad\cite{Nedic2017} & uDIG\tnote{5}\quad\cite{Jakovetic2019} & uEXTRA\tnote{5}\quad\cite{Jakovetic2019} \\
$\left[\begin{array}{c|c}\begin{matrix}\mathcal{A} & -\mu I_N & \mathbf{0}\\ \mathbf{0} & \mathcal{A} & -I_N \\ \mathbf{0} & \mathbf{0} & \mathbf{0}\end{matrix} & \begin{matrix}\mathbf{0} \\ I_N \\I_N\end{matrix} \\ \hline \begin{matrix}\mathcal{A} & -\mu I_N & \mathbf{0}\end{matrix} & \begin{matrix}I_N & \mathbf{0} & \mathbf{0}\end{matrix} \\ \hline \rule{0mm}{3mm}\begin{matrix}\mathbf{0} & \mathcal{U}^T & -\mathcal{U}^T\end{matrix} & \mathbf{0}\end{array}\right]$
&$\left[\begin{array}{c|c}\begin{matrix}\mathcal{A} & -\mu I_N \\ (I_N-\mathcal{A})(\frac{L+m}{2}) & \mathcal{A}\end{matrix} & \begin{matrix}-\mu I_N \\ \mathcal{A}-I_N\end{matrix} \\ \hline \begin{matrix}I_N & \mathbf{0} \end{matrix} & \begin{matrix}I_N & \mathbf{0}\end{matrix} \\ \hline \rule{0mm}{3mm}\begin{matrix}\mathbf{0} & \mathcal{U}^T\end{matrix} & \mathbf{0}\end{array}\right]$
&$\left[\begin{array}{c|c}\begin{matrix}\mathcal{A} & -\mu I_N \\ (I_N-\mathcal{A})(L\mathcal{A}) & \mathcal{A}\end{matrix} & \begin{matrix}-\mu I_N \\ \mathcal{A}-I_N\end{matrix} \\ \hline \begin{matrix}I_N & \mathbf{0} \end{matrix} & \begin{matrix}I_N & \mathbf{0}\end{matrix} \\ \hline \rule{0mm}{3mm}\begin{matrix}\mathbf{0} & \mathcal{U}^T\end{matrix} & \mathbf{0}\end{array}\right]$ 
\\ \hline

\rule{0mm}{3mm}AugDGM\tnote{6}\quad\cite{Xu2015} & SVL\tnote{7}\quad\cite{Sundararajan2020} & ACC-DNGD-SC\tnote{8}\quad\cite{Qu2020} \\
$\left[\begin{array}{c|c}\begin{matrix}\mathcal{A} & -\mu I_N & \mathbf{0}\\ \mathbf{0} & \mathcal{A} & -\mathcal{A} \\ \mathbf{0} & \mathbf{0} & \mathbf{0}\end{matrix} & \begin{matrix}\mathbf{0} \\ \mathcal{A} \\I_N\end{matrix} \\ \hline \begin{matrix}\mathcal{A} & -\mu \mathcal{A} & \mathbf{0}\end{matrix} & \begin{matrix}I_N & \mathbf{0} & \mathbf{0}\end{matrix} \\ \hline \rule{0mm}{3mm}\begin{matrix}\mathbf{0} & \mathcal{U}^T & -\mathcal{U}^T\end{matrix} & \mathbf{0} \end{array}\right]$
& $\left[\begin{array}{c|c}\begin{matrix}\mathcal{A} & \beta I_N \\ \frac{\mathcal{A}-I_N}{\gamma} & I \end{matrix} & \begin{matrix}-\mu I_N \\ \mathbf{0}\end{matrix} \\ \hline \rule{0mm}{3.2mm}\begin{matrix}I_N-\frac{\delta}{\gamma}(I_N-\mathcal{A}) & \mathbf{0} \end{matrix} & \begin{matrix}I_N & \mathbf{0}\end{matrix} \\ \hline \rule{0mm}{3mm}\begin{matrix}\mathbf{0} & \mathcal{U}^T\end{matrix} & \mathbf{0}\end{array}\right]$
&$\left[\begin{array}{c|c}\begin{matrix}\frac{\mathcal{A}}{1+\alpha} & \frac{\alpha\mathcal{A}}{1+\alpha} & -\mu I_N & \mathbf{0} \\ \frac{\alpha\mathcal{A}}{1+\alpha} &\frac{\mathcal{A}}{1+\alpha} & -\frac{\mu}{\alpha} I_N & \mathbf{0} \\ \mathbf{0} & \mathbf{0} & \mathcal{A} & -I_N \\ \mathbf{0} & \mathbf{0} & \mathbf{0} & \mathbf{0} \end{matrix} & \begin{matrix}\mathbf{0} \\ \mathbf{0} \\ I_N \\I_N\end{matrix} \\ \hline \rule{0mm}{3.5mm}\begin{matrix}\frac{\alpha^2+1}{(1+\alpha)^2}\mathcal{A} & \frac{2\alpha}{(1+\alpha)^2}\mathcal{A} & \frac{-2\mu}{1+\alpha}I_N & \mathbf{0}\end{matrix} & \begin{matrix}I_N & \mathbf{0} & \mathbf{0} & \mathbf{0}\end{matrix} \\ \hline \rule{0mm}{3mm}\begin{matrix}\mathbf{0} & \mathbf{0} & \mathcal{U}^T & -\mathcal{U}^T\end{matrix} & \mathbf{0}\end{array}\right]$
\end{tabular}
\begin{tablenotes}[para]
\item[1] Fixed point bias $\mathcal{O}(\mu)$. 
\item[2] $\widetilde{\mathcal{A}}=\frac{1}{2}(I_N+\mathcal{A})$. 
\item[3] Supports non-smooth optimization.
\item[4] Designed for time-varying $\mathcal{A}^t$. \\
\item[5] All agents must know $L=\mathrm{max}\{L_k\}_{k=1}^N$ and $m=\mathrm{min}\{m_k\}_{k=1}^N$. \item[6] Supports uncoordinated step-sizes if replacing $\mu I_N$ with $\mathrm{diag}\{\mu_k\}_{k=1}^N$.\\
\item[7] Designed for optimal convergence rate in time-varying setting. Parameters $\mu,\beta,\gamma,\delta$ tuned according to Theorem 12 / Algorithm 2 in~\cite{Sundararajan2020}.\\
\item[8] Parameter $\alpha=\sqrt{m \mu}$. Designed such that linear convergence rate is less sensitive to increasing condition ratio $\kappa$.
\end{tablenotes}
\end{threeparttable}}
\label{table:algs}\vspace{-4mm}
\end{table*}
There is no guarantee that there will exist an $\mathcal{A}$ that satisfies some arbitrary network sparsity pattern. In the consensus case, the graph must simply be connected to guarantee existence. Equation (66) in \cite{Nassif2020b} defines a convex optimization problem based on the $\ell^1$ norm sparsity heuristic that takes an arbitrary sparsity pattern and generates an $\mathcal{A}$ that satisfies (\ref{eq:Aiff}) with minimal edges added to the original network topology.

\vspace{-1mm}\subsection{Numerical Example}\label{sec:num}
Consider a $4$-agent network with nonlinear, strongly convex local objective functions of the form $J_k(\omega_k)=a_k(\omega_k-b_k)^2-\cos(\omega_k)$, where $\{a_1,a_2,a_3,a_4\}=\{3,7,2,4\}$ and $\{b_1,b_2,b_3,b_4\}=\{-2,-1,5,12\}$. Distributed optimization is to be performed with a subspace constraint defined by  $\mathcal{U}=\left[\begin{smallmatrix}1 &\, 2 &\, 3 &\, 4\\1 &\, 1 &\, 2 &\, 2\end{smallmatrix}\right]^T$. We introduce gradient noise, where the noise bound $R$ is of the form $\sigma^2_wI$, with $\sigma_w=0.5$. Figure~\ref{fig:simulation} shows simulation results for the algorithms defined in Table~\ref{table:algs}, using a gossip matrix $\mathcal{A}_1$ that satisfies (\ref{eq:Aiff}) for the chosen $\mathcal{U}$, with step-size $\mu=0.012$. Despite the fact that $\omega^\mathrm{opt}=\left[\begin{smallmatrix}-0.719 & \, 3.996 & \, 3.277 & \, 7.991\end{smallmatrix}\right]^T$ (computed numerically using CVX~\cite{cvx}) is not a consensus solution, all generalized algorithms converge towards $\omega^\mathrm{opt}$ without bias. DAS and DiSPO converge with fixed point biases. 
\begin{figure}[h]
\includegraphics[width=\columnwidth]{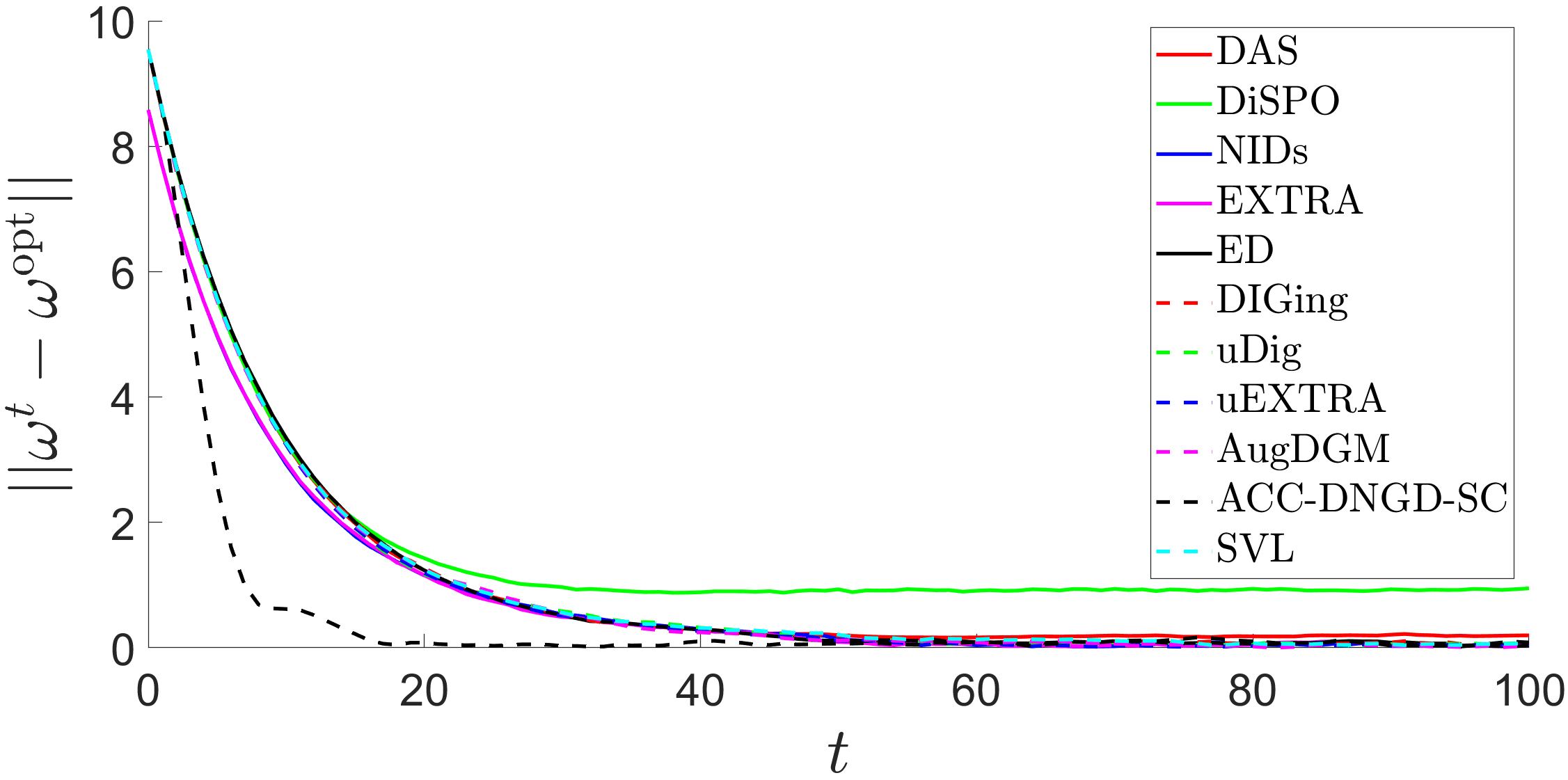}
\centering\vspace{-6mm}
\caption{Norm of error with respect to $\omega^\mathrm{opt}$ after performing optimization over a subspace constraint. All generalized consensus algorithms reach a steady state centered at zero error, while DAS and DiSPO reach a steady state centered about a nonzero bias.}
\label{fig:simulation}\vspace{-4mm}
\end{figure}
 
\section{Analysis Results}\label{sec:analysis}
The dynamics of the algorithms from (\ref{eq:algmodel}) and of $\Psi$ from (\ref{eq:psimodel}) can be used to define the following extended system $\hat{G}$:
\begin{equation}\label{eq:extendedsystem}
\hat{\xi}^{t+1}=\hat{A}\hat{\xi}^t+\hat{B}u^t+\begin{bmatrix}B \\ 0 \end{bmatrix}w^t,\quad z^t=\hat{C}\hat{\xi}^t+\hat{D}u^t,
\end{equation}
where $\hat{\xi}^t=\left[\begin{smallmatrix}\xi^t \\ \psi^t \end{smallmatrix}\right]$, $\hat{A}=\left[\begin{smallmatrix}A &\, 0 \\B^y_\Psi C_y &\, A_\Psi \end{smallmatrix}\right]$, $\hat{B}=\left[\begin{smallmatrix}B \\ B^u_\Psi\end{smallmatrix}\right]$, $\hat{C}=\left[\begin{smallmatrix}D^y_\Psi C_y &\, C_\Psi\end{smallmatrix}\right]$, and $\hat{D}=D^u_\Psi$. As before, in the absence of gradient noise, the extended system has fixed point $(\hat{\xi}^*,u^*,z^*)$.

Using this extended system, the following analysis results provide bounds on the rate of convergence and sensitivity.
\begin{thm}[Distributed Algorithm Rate of Convergence]\label{thm:converge}
Consider solving problem (\ref{eq:Jglobal}) for a set of local objective functions $J_k \in S(m_k,L_k)$ for all $k\in \mathcal{V}$, whose gradient computations satisfy the $\rho$-hard IQC defined by ($\Psi,M$) for a given $\rho>0$. Assume the noise $w^t \equiv 0$. Also, assume the algorithm has a unique fixed point and satisfies the invariant condition (\ref{eq:alginvariant}). Let $H$ be a matrix whose columns form a basis for the nullspace of $\begin{bmatrix}F_\xi & \mathbf{0} & F_u\end{bmatrix}$. 

If there exist $P \succeq 0$ and $\lambda \geq 0$ such that\vspace{-1mm}
 \begin{multline}\label{eq:LMIconverge} 
H^T\biggl(\begin{bmatrix}\hat{A}^TP\hat{A}-\rho^2P & \hat{A}^TP\hat{B}\\ \hat{B}^TP\hat{A} & \hat{B}^TP\hat{B} \end{bmatrix}\\ +\lambda \begin{bmatrix}\hat{C} & \hat{D}\end{bmatrix}^T M \begin{bmatrix}\hat{C} & \hat{D}\end{bmatrix}\biggr) H \preceq 0, 
 \end{multline} 
then $\norm{\xi^t-\xi^*}\leq c\rho^t\norm{\hat{\xi}^0-\hat{\xi}^*}$ for some constant $c>0$, for all $t\in \mathbb{N}$. \end{thm}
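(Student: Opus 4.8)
The plan is to run the dissipativity form of the IQC argument of~\cite{Lessard2016}, restricted to the subspace into which the algorithm invariant forces the error trajectory. First I would pass to error coordinates $\delta\hat\xi^t\coloneqq\hat\xi^t-\hat\xi^*$, $\delta u^t\coloneqq u^t-u^*$, $\delta z^t\coloneqq z^t-z^*$, and write $\zeta^t$ for the stacked vector $[(\delta\hat\xi^t)^T\ (\delta u^t)^T]^T$. Since $w^t\equiv0$, subtracting the fixed-point relations from~(\ref{eq:extendedsystem}) gives the \emph{linear} error dynamics $\delta\hat\xi^{t+1}=\hat A\,\delta\hat\xi^t+\hat B\,\delta u^t$ and $\delta z^t=\hat C\,\delta\hat\xi^t+\hat D\,\delta u^t$. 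Evaluating the invariant~(\ref{eq:alginvariant}) at iteration $t$ and at the fixed point and subtracting gives $F_\xi\,\delta\xi^t+F_u\,\delta u^t=0$; since there is no constraint on the $\Psi$-state $\psi$, this is exactly the statement that $\zeta^t$ lies in $\mathcal N\!\left(\begin{bmatrix}F_\xi & \mathbf 0 & F_u\end{bmatrix}\right)=\mathcal R(H)$, so $\zeta^t=Hv^t$ for some sequence $v^t$.

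Next I would take the quadratic storage function $V^t\coloneqq(\delta\hat\xi^t)^TP\,\delta\hat\xi^t\ge0$. A direct computation using the linear error dynamics shows that $V^{t+1}-\rho^2V^t$ equals the quadratic form $(\zeta^t)^T\!\left[\begin{smallmatrix}\hat A^TP\hat A-\rho^2P & \hat A^TP\hat B\\ \hat B^TP\hat A & \hat B^TP\hat B\end{smallmatrix}\right]\!\zeta^t$, while $(\delta z^t)^TM\,\delta z^t=(\zeta^t)^T[\hat C\ \hat D]^TM[\hat C\ \hat D]\,\zeta^t$. Substituting $\zeta^t=Hv^t$ and multiplying the LMI~(\ref{eq:LMIconverge}) on the left by $(v^t)^T$ and on the right by $v^t$ then yields the one-step dissipation inequality $V^{t+1}-\rho^2V^t+\lambda\,(\delta z^t)^TM\,\delta z^t\le0$, valid for all $t\in\mathbb N$.

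I would then multiply this inequality by the weight $\rho^{-2t}\ge0$ and sum over $t=0,\dots,T$. The $V$-terms telescope to $\rho^{-2T}V^{T+1}-\rho^2V^0$, while $\lambda\sum_{t=0}^T\rho^{-2t}(\delta z^t)^TM\,\delta z^t\ge0$ because $\lambda\ge0$ and $\Delta$ satisfies the $\rho$-hard IQC defined by $(\Psi,M)$. Hence $\rho^{-2T}V^{T+1}\le\rho^2V^0$, i.e., $V^t\le\rho^{2t}V^0$ for all $t\in\mathbb N$.

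It remains to turn this into the stated estimate. Using $V^0\le\lambda_{\max}(P)\,\norm{\hat\xi^0-\hat\xi^*}^2$ and $\norm{\xi^t-\xi^*}\le\norm{\hat\xi^t-\hat\xi^*}$, the claim follows with $c=\sqrt{\lambda_{\max}(P)/\underline{\lambda}}$ once a lower bound $V^t\ge\underline{\lambda}\,\norm{\hat\xi^t-\hat\xi^*}^2$ with $\underline{\lambda}>0$ is available along the trajectory. I expect this to be the main obstacle: since $P$ is only required to be positive \emph{semi}definite, one must exploit that $\delta\hat\xi^t$ remains in the $\hat\xi$-projection of $\mathcal R(H)$ and argue that $P$ is positive definite on that subspace --- a mild nondegeneracy condition that holds for the algorithms of Table~\ref{table:algs}. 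The telescoping dissipation estimate itself is routine; the care is in the invariant bookkeeping and in passing from the positive semidefinite Lyapunov certificate to an honest state-norm bound.
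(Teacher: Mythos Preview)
Your proposal follows exactly the paper's own proof: pass to error coordinates, use the invariant to write the stacked error as $Hv^t$, pre- and post-multiply the LMI~(\ref{eq:LMIconverge}) to obtain the one-step dissipation inequality, weight by $\rho^{-2t}$ and telescope, and invoke the $\rho$-hard IQC to discard the sum. The only difference is that the paper simply sets $c=\sqrt{\lambda_{\max}(P)/\lambda_{\min}(P)}$ without further comment, whereas you correctly flag that $P\succeq0$ alone does not guarantee $\lambda_{\min}(P)>0$; your nondegeneracy remark on the invariant subspace is a refinement the paper's proof omits.
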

\begin{proof} 
Define the error states $\tilde{\xi}^t\coloneqq\hat{\xi}^t-\hat{\xi}^*$, $\tilde{u}^t\coloneqq u^t-u^*$, and $\tilde{z}^t\coloneqq z^t-z^*$. The columns of $H$ span the nullspace of $\begin{bmatrix}F_\xi & \mathbf{0} & F_u\end{bmatrix}$, so any vector $\left[\begin{smallmatrix}\tilde{\xi}^t \\ \tilde{u}^t\end{smallmatrix}\right]$ is of the form $Hh^t$ for some $h^t$. Pre- and post-multiply (\ref{eq:LMIconverge}) by $(h^t)^T$ and $h^t$, respectively, to obtain
\begin{equation*}(\tilde{\xi}^{t+1})^TP(\tilde{\xi}^{t+1})-\rho^2(\tilde{\xi}^t)^TP\tilde{\xi}^t+\lambda (\tilde{z}^t)^TM\tilde{z}^t \leq 0.\end{equation*}
 Multiply by $\rho^{-2t}$ and sum the resulting inequalities from $0$ to $T-1$ for any $T \in \mathbb{N}$. The first two terms produce a telescoping sum such that \begin{equation*}\rho^{-2T+2}(\tilde{\xi}^T)^TP(\tilde{\xi}^T)-\rho^2(\tilde{\xi}^0)^TP\tilde{\xi}^0+\lambda \sum_{i=0}^{T-1}\rho^{-2i}(\tilde{z}^i)^TM\tilde{z}^i \leq 0.\end{equation*}
 The summation is positive because the uncertainty satisfies a $\rho$-hard IQC. As a result, $(\tilde{\xi}^t)^TP(\tilde{\xi}^t)\leq \rho^{2t}(\tilde{\xi}^0)^TP\tilde{\xi}^0$, which implies that $\lambda_{\mathrm{min}}(P)\norm{\tilde{\xi}^t}^2\leq \rho^{2t}\lambda_{\mathrm{max}}(P)\norm{\tilde{\xi}^0}^2$, where $\lambda_{\mathrm{min}}(P)$ and $\lambda_{\mathrm{max}}(P)$ are the minimum and maximum eigenvalues of $P$, respectively.
Rearrange to produce the result: \vspace{-2mm}
 \begin{equation*}
\norm{\xi^t-\xi^*}\leq \norm{\hat{\xi}^t-\hat{\xi}^*}\leq c\rho^t\norm{\hat{\xi}^0-\hat{\xi}^*}, \quad \textstyle c=\sqrt{\frac{\lambda_{\mathrm{max}}(P)}{\lambda_{\mathrm{min}}(P)}}.
 \vspace{-2mm}\end{equation*}
\vspace{-1mm}\end{proof}
Multiple IQCs can be used simultaneously. For $r$ IQCs, the output $z^t$ in (\ref{eq:psimodelz}) becomes $\mathrm{col}\{z_i^t\}_{i=1}^r$. Additionally, the $\lambda M$ term in (\ref{eq:LMIconverge}) is replaced by the block-diagonal matrix formed from $\lambda_iM_i$ for $i=1,\ldots,r$, namely,
$\mathrm{blkdiag}\{\lambda_1M_1,\ldots,\lambda_rM_r\}$.
\begin{thm}[Distributed Algorithm Sensitivity]\label{thm:robust}
Consider solving problem (\ref{eq:Jglobal}) for a set of local objective functions $J_k \in S(m_k,L_k)$ for all $k\in \mathcal{V}$, whose gradient computations satisfy the $\rho$-hard IQC defined by ($\Psi,M$) for a given $\rho>0$. Assume the algorithm is subject to zero mean additive gradient noise satisfying Assumption~\ref{ass:noise}. Also, assume the algorithm has a unique fixed point and satisfies the invariant condition (\ref{eq:alginvariant}). Let $H$ be a matrix whose columns form a basis for the nullspace of $\begin{bmatrix}F_\xi & \mathbf{0} & F_u\end{bmatrix}$.

If there exist $P =\left[\begin{smallmatrix}P_{11} &\, P_{12} \\ P_{12}^T &\, P{22}\end{smallmatrix}\right]\succeq 0$ and $\lambda \geq 0$ such that
\begin{multline} \label{eq:LMIsensitivity} 
H^T\biggl(\begin{bmatrix}\hat{A}^TP\hat{A}-P & \hat{A}^TP\hat{B}\\ \hat{B}^TP\hat{A} & \hat{B}^TP\hat{B} \end{bmatrix}+\lambda \begin{bmatrix}\hat{C} & \hat{D}\end{bmatrix}^T M \begin{bmatrix}\hat{C} & \hat{D}\end{bmatrix}\\ +\begin{bmatrix}C_\omega^TC_\omega & 0 \\ 0 & 0 \end{bmatrix}\biggr) H \preceq 0,
\end{multline}
then $\gamma \leq \sqrt{\mathrm{tr}(RB^TP_{11}B)}$.
\end{thm}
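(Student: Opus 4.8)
The plan is to mirror the dissipation-inequality argument used for Theorem~\ref{thm:converge}, but now carry the noise term through and average in time rather than exploit a telescoping geometric sum. Introduce the error variables $\tilde\xi^t\coloneqq\hat\xi^t-\hat\xi^*$, $\tilde u^t\coloneqq u^t-u^*$, $\tilde z^t\coloneqq z^t-z^*$, and write the noisy extended dynamics from (\ref{eq:extendedsystem}) as $\tilde\xi^{t+1}=\hat A\tilde\xi^t+\hat B\tilde u^t+\bar B w^t$ with $\bar B\coloneqq\left[\begin{smallmatrix}B\\0\end{smallmatrix}\right]$. The invariant (\ref{eq:alginvariant}), assumed to hold along trajectories, gives $F_\xi\tilde\xi^t+F_u\tilde u^t=0$, so $\left[\begin{smallmatrix}\tilde\xi^t\\\tilde u^t\end{smallmatrix}\right]=Hh^t$ for some $h^t$. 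Pre- and post-multiplying (\ref{eq:LMIsensitivity}) by $(h^t)^T$ and $h^t$, and using $\hat C\tilde\xi^t+\hat D\tilde u^t=\tilde z^t$ together with $\omega^t=C_\omega\xi^t$, yields the pointwise inequality
\begin{equation*}
(\hat A\tilde\xi^t+\hat B\tilde u^t)^TP(\hat A\tilde\xi^t+\hat B\tilde u^t)-(\tilde\xi^t)^TP\tilde\xi^t+\lambda(\tilde z^t)^TM\tilde z^t+\norm{\omega^t-\omega^*}^2\leq 0.
\end{equation*}

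Next I would substitute $\hat A\tilde\xi^t+\hat B\tilde u^t=\tilde\xi^{t+1}-\bar Bw^t$ and take expectations. Expanding $(\tilde\xi^{t+1}-\bar Bw^t)^TP(\tilde\xi^{t+1}-\bar Bw^t)$, the cross term $-2\,\mathbb{E}[(\hat A\tilde\xi^t+\hat B\tilde u^t)^TP\bar Bw^t]$ vanishes: by Assumption~\ref{ass:noise}, $w^t$ is zero-mean and independent of $w^0,\dots,w^{t-1}$, and $\tilde\xi^t,\tilde u^t$ are (by induction on the dynamics, starting from a deterministic $\tilde\xi^0$) measurable functions of $w^0,\dots,w^{t-1}$, hence independent of $w^t$. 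The remaining noise contribution is $\mathbb{E}[(w^t)^T\bar B^TP\bar Bw^t]=\mathrm{tr}\!\big(\bar B^TP\bar B\,\mathbb{E}[w^t(w^t)^T]\big)\leq\mathrm{tr}(\bar B^TP\bar B\,R)$, using $\bar B^TP\bar B\succeq0$ and $\mathbb{E}[w^t(w^t)^T]\preceq R$; the block structure of $\bar B$ and $P$ gives $\bar B^TP\bar B=B^TP_{11}B$, so this term is at most $\mathrm{tr}(RB^TP_{11}B)$. Rearranging produces
\begin{equation*}
\mathbb{E}\norm{\omega^t-\omega^*}^2\leq \mathbb{E}(\tilde\xi^t)^TP\tilde\xi^t-\mathbb{E}(\tilde\xi^{t+1})^TP\tilde\xi^{t+1}+\mathrm{tr}(RB^TP_{11}B)-\lambda\,\mathbb{E}(\tilde z^t)^TM\tilde z^t.
\end{equation*}

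Now sum this from $t=0$ to $T-1$. The $P$-terms telescope, leaving $\mathbb{E}(\tilde\xi^0)^TP\tilde\xi^0-\mathbb{E}(\tilde\xi^T)^TP\tilde\xi^T\leq\mathbb{E}(\tilde\xi^0)^TP\tilde\xi^0$ since $P\succeq0$. Because a $\rho$-hard IQC (for $\rho\le1$) is in particular a hard IQC by the nesting $\{\rho\text{-hard IQCs},\,\rho<1\}\subseteq\{\text{hard IQCs}\}$, we have $\sum_{t=0}^{T-1}(\tilde z^t)^TM\tilde z^t\geq0$ pathwise, so the $\lambda$-term is $\le0$ in expectation. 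Hence $\sum_{t=0}^{T-1}\mathbb{E}\norm{\omega^t-\omega^*}^2\leq\mathbb{E}(\tilde\xi^0)^TP\tilde\xi^0+T\,\mathrm{tr}(RB^TP_{11}B)$; dividing by $T$, pushing $\mathbb{E}$ through the finite average, and taking $\limsup_{T\to\infty}$ annihilates the initial-condition term, giving $\gamma\leq\sqrt{\mathrm{tr}(RB^TP_{11}B)}$ after using monotonicity and continuity of the square root.

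I expect the only delicate points to be (i) justifying the vanishing of the cross term, which requires the independence-across-iterations part of Assumption~\ref{ass:noise} together with measurability of $\tilde\xi^t$ with respect to the past noise, and (ii) the observation that the $\rho$-hard IQC hypothesis still supplies a \emph{nonnegative unweighted} partial sum $\sum_{t=0}^{T-1}(\tilde z^t)^TM\tilde z^t$, which is exactly what the $\rho$-free LMI (\ref{eq:LMIsensitivity}) is matched to; the identification $\bar B^TP\bar B=B^TP_{11}B$ and the rest of the bookkeeping are routine, as in the proof of Theorem~\ref{thm:converge}.
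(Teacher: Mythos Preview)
Your argument is correct and follows essentially the same dissipation-inequality route as the paper's proof: multiply the LMI by the invariant-restricted error vector, substitute the noisy dynamics, zero out the cross term by independence, telescope, and average. Your two bookkeeping choices---dropping $\mathbb{E}(\tilde\xi^T)^TP\tilde\xi^T$ directly via $P\succeq0$ and invoking the \emph{hard} IQC pathwise for each finite $T$---are slightly cleaner than the paper's version, which instead argues boundedness of the terminal term via the $\rho=1$ instance of (\ref{eq:LMIconverge}) and appeals to the \emph{soft} IQC after passing to the limit.
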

\begin{proof}Define the error states $\tilde{\xi}^t\coloneqq\hat{\xi}^t-\hat{\xi}^*$, $\tilde{u}^t\coloneqq u^t-u^*$, $\tilde{\omega}^t\coloneqq \omega^t-\omega^*$ and $\tilde{z}^t\coloneqq z^t-z^*$. The columns of $H$ span the nullspace of $\begin{bmatrix}F_\xi & \mathbf{0} & F_u\end{bmatrix}$, so any vector $\left[\begin{smallmatrix}\tilde{\xi}^t \\ \tilde{u}^t\end{smallmatrix}\right]$ is of the form $Hh^t$ for some $h^t$. Pre- and post-multiply (\ref{eq:LMIsensitivity}) by $(h^t)^T$ and $h^t$, respectively, to obtain \begin{equation*}\begin{gathered} (\tilde{\xi}^{t+1})^TP(\tilde{\xi}^{t+1})-(\tilde{\xi}^t)^TP\tilde{\xi}^t-2(\tilde{\xi}^{t+1})^TP\left[\begin{smallmatrix}B\\0\end{smallmatrix}\right]w^t\\
+(w^t)^T\left[\begin{smallmatrix}B\\0\end{smallmatrix}\right]^TP\left[\begin{smallmatrix}B\\0\end{smallmatrix}\right]w^t+\lambda (\tilde{z}^t)^TM\tilde{z}^t + \norm{\tilde{\omega}^t}^2 \leq 0.\end{gathered}\end{equation*}
For the third term, substitute $\tilde{\xi}^{t+1}$ using (\ref{eq:extendedsystem}) to obtain \begin{equation*}\begin{gathered}(\tilde{\xi}^{t+1})^TP(\tilde{\xi}^{t+1})-(\tilde{\xi}^t)^TP\tilde{\xi}^t-2(\hat{A}\tilde{\xi}^t+\hat{B}\tilde{u}^t)^TP\left[\begin{smallmatrix}B\\0\end{smallmatrix}\right]w^t\\
-(w^t)^T\left[\begin{smallmatrix}B\\0\end{smallmatrix}\right]^TP\left[\begin{smallmatrix}B\\0\end{smallmatrix}\right]w^t+\lambda (\tilde{z}^t)^TM\tilde{z}^t + \norm{\tilde{\omega}^t}^2 \leq 0.\end{gathered}\end{equation*} 
Take the expectation. The third term is zero because $w^t$ is zero-mean and  $\xi^t$ is independent of $w^t$. Rearrange to obtain
\begin{equation*}\begin{gathered}\mathbb{E}(\tilde{\xi}^{t+1})^TP(\tilde{\xi}^{t+1})-\mathbb{E}(\tilde{\xi}^t)^TP\tilde{\xi}^t+\mathbb{E}\lambda (\tilde{z}^t)^TM\tilde{z}^t + \mathbb{E}\norm{\tilde{\omega}^t}^2\leq \\
\mathbb{E}(w^t)^T\left[\begin{smallmatrix}B\\0\end{smallmatrix}\right]^TP\left[\begin{smallmatrix}B\\0\end{smallmatrix}\right]w^t=\mathrm{tr}(RB^TP_{11}B).\end{gathered}\end{equation*}
Sum from $t=0$ to $t=T-1$ to obtain \vspace{-2mm}
\begin{equation*}\begin{gathered}\mathbb{E}\frac{1}{T}(\tilde{\xi}^T)^TP\tilde{\xi}^T-\mathbb{E}\frac{1}{T}(\tilde{\xi}^0)^TP\tilde{\xi}^0+\mathbb{E}\frac{1}{T}\sum_{t=0}^{T-1}\lambda (\tilde{z}^t)^TM\tilde{z}^t\\
+ \mathbb{E}\frac{1}{T}\sum_{t=0}^{T-1}\norm{\tilde{\omega}^t}^2 \leq\mathrm{tr}(RB^TP_{11}B).\end{gathered}\vspace{-2mm}\end{equation*}
 For a bounded initial condition, $(\tilde{\xi}^0)^TP\tilde{\xi}^0$ is bounded. The feasibility of (\ref{eq:LMIsensitivity}) implies that (\ref{eq:LMIconverge}) holds for $\rho=1$, which implies that $(\tilde{\xi}^T)^TP\tilde{\xi}^T$ is bounded. Take the limit as $T \rightarrow \infty$, noting that the third term is positive because a $\rho$-hard IQC satisfies the soft IQC condition. Substitute the definition of $\gamma$ to obtain\vspace{-4mm}
 \begin{equation*} 
 \gamma=\mathbb{E}\frac{1}{T}\sum_{t=0}^{T-1}\norm{\omega^t-\omega^*}^2\leq \mathrm{tr}(RB^TP_{11}B).
 \vspace{-4mm}\end{equation*}
 \vspace{-1mm}\end{proof}

Theorem~\ref{thm:converge} is simply \cite[Lemma~2]{Sundararajan2018}, but framed in terms of the more general IQC framework. In \cite{Sundararajan2018}, the differences $u^t-u^*$ and $y^t-y^*$ are assumed to satisfy a quadratic inequality, which is equivalent to having a static multiplier $\Psi$, for example, the distributed sector IQC from Lemma~\ref{lem:sector}. Allowing for dynamic characterizations of the uncertainty $\Delta$, as in the distributed weighted off-by-one IQC from Lemma~\ref{lem:weighted}, necessitates use of the extended system (\ref{eq:extendedsystem}) and reduces conservatism. A minimum $\rho$ can be found by performing a bisection search, checking for feasibility of (\ref{eq:LMIconverge}) iteratively. Theorem~\ref{thm:robust} is a new result, inspired by the robustness analysis of centralized algorithms in \cite{VanScoy2021}. The inequality (\ref{eq:LMIsensitivity}) is a linear matrix inequality (LMI) in $P$, so the minimum $\mathrm{tr}(RB^TP_{11}B)$ can be found directly by solving a  SDP.

A similar convergence result is presented in \cite{Sundararajan2020}, which assesses an algorithm's robustness with respect to time-varying networks. Comparatively, Theorem~\ref{thm:converge} has the potential to be less conservative since it considers a specific network rather than the worst-case over all networks that are bounded by a given spectral gap $\sigma$. Additionally, Theorem~\ref{thm:converge} and Theorem~\ref{thm:robust} support agent-specific parameters (e.g., $L_k$, $m_k$, $\mu_k$) rather than assuming these parameters to be uniform across the network's agents. The trade-off is that both LMIs scale with the size of the network, leading to increased computation time for analysis.

As mentioned in Section~\ref{sec:intro}, LMI-based analysis is beneficial because it does not rely on algorithm-specific expertise to obtain convergence and robustness guarantees. For example, our framework can validate the robustness bounds of ED from \cite{Yuan2020} without their sophisticated proof that uses the mean-value theorem to bound the error dynamics of the algorithm. Our approach also has the potential to provide numerical guarantees that are less conservative than those provided by alternative proofs, similar to how \cite{Lessard2016} finds an improved bound for Nesterov's Accelerated Method. Furthermore, expert analysis of the aforementioned algorithms is mainly restricted to a deterministic setting, and so our approach can provide robustness guarantees that were previously nonexistent. 

\section{Case Study}\label{sec:bias}
The above analysis tools are used to show how a generalized distributed consensus algorithm performs when solving a multitask inference problem found in \cite{Nassif2020a,Nassif2020b}. Here, the agents use streaming data to minimize their individual costs, where the optimal parameter must lie in a low-dimensional subspace. In this setting, the local objective functions can be considered to be the expectation of some loss function $Q(\omega_k;\mathbf{x}_k)$. Random variable $\mathbf{x}_k$ corresponds to data received by agent $k$, whose distribution is unknown. Since only a finite number of samples $\mathbf{x}_k$ are received by each agent, their local gradient computations are subject to gradient noise. Local costs are assumed twice differentiable and convex with bounded Hessian, which satisfies Assumptions~\ref{ass:Lip} and \ref{ass:Strong}. 
In \cite{Nassif2020a}, DAS, which is subject to a fixed point bias, is proposed to solve this type of problem. 

Consider the example from Section~\ref{sec:num}, where instead of prescribing local objective functions, worst-case analysis is performed over all objective functions in $\mathcal{S}(m_k,L_k)$ for all $k\in\mathcal{V}$. Two gossip matrices $\mathcal{A}_1$ and $\mathcal{A}_2$ are considered, with spectral gaps of $\sigma=0.19$ and $\sigma=0.63$, respectively. Figure~\ref{fig:multitask} shows the trade-off between  sensitivity $\gamma$ and  convergence rate $\rho$ as step-size $\mu$ varies for the DAS algorithm and the generalized AugDGM algorithm. These numerical results were obtained by implementing Theorems~\ref{thm:converge} and \ref{thm:robust}, subject to both IQCs defined in Lemmas~\ref{lem:sector} and \ref{lem:weighted}, in MATLAB with the CVX modeling language and the MOSEK~solver~\cite{mosek}. 

In general, an algorithm will exhibit faster convergence (smaller $\rho$) as $\mu$ increases at the cost of worse robustness (higher $\gamma$). At an algorithm-specific limiting $\mu$, which is dictated by $L$, $m$, and $\sigma$, any further increase of $\mu$ will be detrimental to both convergence rate and robustness. This phenomenon is shown in the AugDGM curve for $\sigma=0.63$, where worst-case convergence rate cannot improve beyond about $0.8$ (corresponding to $\mu=0.05$).
\begin{figure}[h]
\includegraphics[width=\columnwidth]{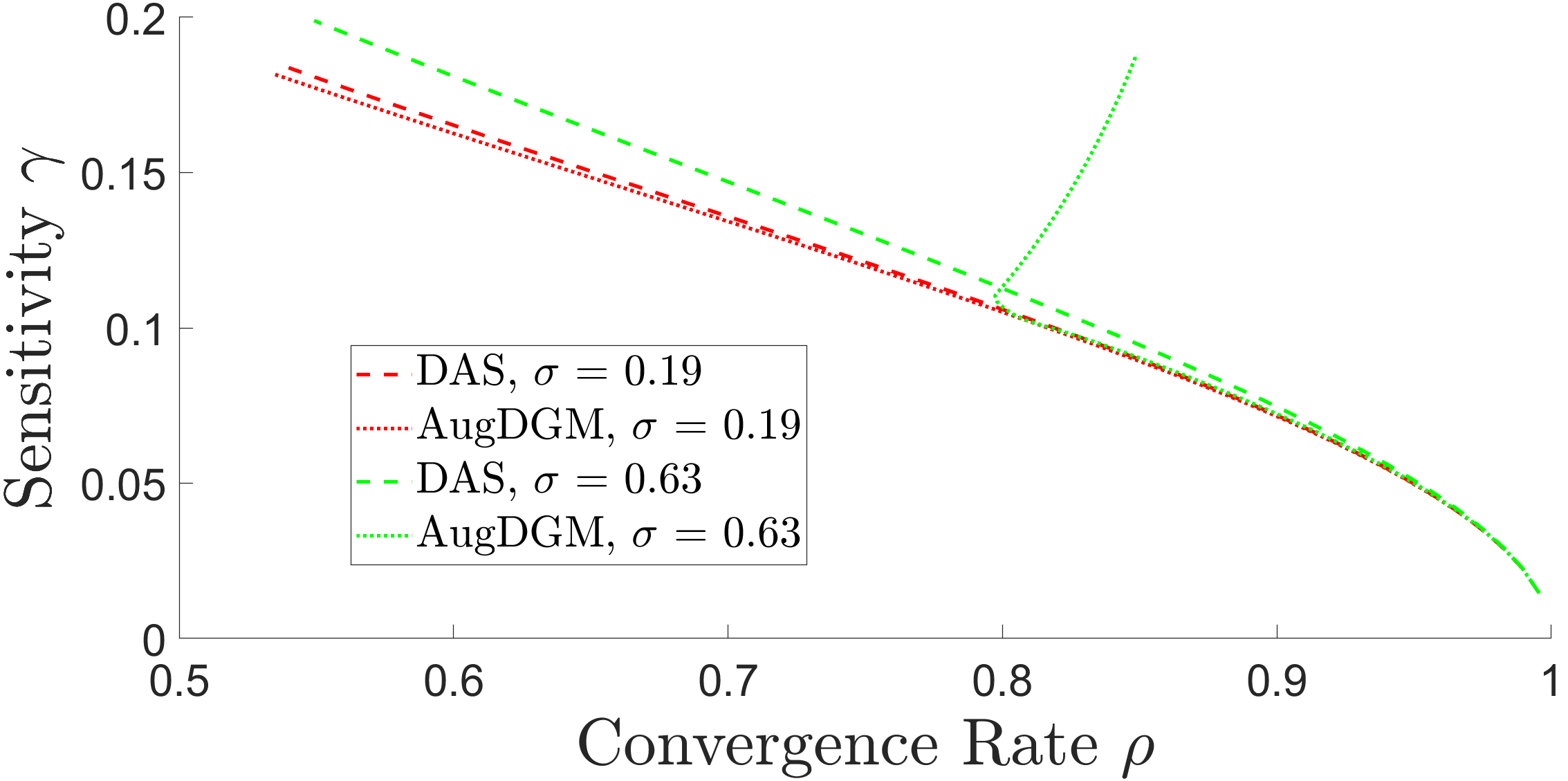}
\centering\vspace{-6mm}
\caption{Trade-off between sensitivity $\gamma$ and convergence rate $\rho$, obtained by varying algorithm step-size $\mu$, for an example multitask inference problem.}
\label{fig:multitask}
\end{figure}

For the $\sigma=0.19$ case, the AugDGM algorithm performs strictly better than DAS over the prescribed step-size range, with slightly lower $\gamma$ for each given $\rho$. Results for the $\sigma=0.63$ case are similar up to the limiting step-size of AugDGM. If prioritizing convergence rate, it would appear that DAS has greater potential in this scenario since it can achieve convergence rates between $0.5$ and $0.6$. However, both DAS and DiSPO are subject to fixed point biases, which is not captured in this analysis, since we bound variance about the algorithm's fixed point. In this large step-size regime, the bias introduced by DAS is large enough to prohibit the use of the algorithm. For example, if using the objective functions described in Section~\ref{sec:num} and gossip matrix $\mathcal{A}_1$, the bias $\norm{\omega^{\mathrm{opt}}-\omega^*}$ is $0.7$ at $\mu=0.05$, increasing to $1.5$ at $\mu=0.12$. Both are an order of magnitude larger than $\gamma$. For $\mathcal{A}_2$, the bias is even higher, at $1.8$ and $3.1$ for $\mu=0.05$ and $\mu=0.12$, respectively. In summary, applying a generalized distributed consensus algorithm designed for bias removal can achieve similar or better performance compared to the DAS algorithm, with a stronger benefit when the network is well-connected (low $\sigma$).

\section{Conclusion}\label{sec:conclude}
In this paper, we show that algorithms that remove the fixed point bias of DGD can be extended to solve the more general problem of distributed optimization over subspace constraints. We provide an analysis framework that can analyze the performance of these generalized algorithms in terms of worst-case robustness and convergence rate. Our framework can certify (or improve) the convergence rates provided by algorithm designers, as well as provide new robustness guarantees for algorithms that have not been previously considered in the stochastic setting. Finally, we demonstrate the utility of our framework by showing how a generalized consensus algorithm can be applied to a multitask inference problem.

\bibliographystyle{ieeetr}
\bibliography{references}

%

\end{document}